\newtheorem{definition}{Definition}
\newtheorem{theorem}[definition]{Theorem}
\newtheorem{lemma}[definition]{Lemma}
\newtheorem{conjecture}[definition]{Conjecture}
\newcommand{\bigO}{\ensuremath{O}}
\newcommand{\comment}[1]{}
\newcommand{\es}{\emptyset}
\newcommand{\cK}{\mathcal{K}}
\newcommand{\cL}{\mathcal{L}}
\newcommand{\sm}{\setminus}
\title{Long cycles through prescribed vertices have the Erd\H{o}s-P\'osa property}
\author{Henning Bruhn, Felix Joos and Oliver Schaudt}
\date{}
\begin{document}
\maketitle

\begin{abstract}
\noindent
We prove that for every graph, any vertex subset $S$, and given integers $k,\ell$:
there are $k$ disjoint cycles of length at least $\ell$ that each contain at least one 
vertex from $S$, or a vertex set of size $\bigO(\ell \cdot k \log k)$ that meets all such cycles.
This generalises 
previous results of Fiorini and Herinckx and of Pontecorvi and Wollan.

In addition, we  describe an algorithm for our main result that
runs in $\bigO(k \log k \cdot s^2 \cdot (f(\ell) \cdot n+m))$ 
time, where $s$ denotes the cardinality of $S$.
\end{abstract}

\section{Introduction}

Menger's theorem is an example of a most satisfactory type of existence result: 
either there are $k$ objects of the desired kind (disjoint paths between two vertex sets),
or there is a simple obstruction that excludes their existence (a separator of 
less than $k$ vertices). In many other situations, however, such an ideal characterisation 
cannot be achieved. The classic theorem of Erd\H os and P\'osa
is, in that respect, the next best type of existence result: there are $k$ disjoint
cycles, unless there is a simple obstruction that excludes the existence of 
\emph{many} more than $k$ disjoint cycles.

\begin{theorem}[Erd\H os and P\'osa~\cite{EP62}]\label{epthm}
Any graph either contains $k$ disjoint cycles or there is 
vertex set of size $\bigO(k\log k)$ meeting all cycles. 
\end{theorem}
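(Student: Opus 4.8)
The plan is to prove the quantitative contrapositive: there is a constant $c$ such that every graph containing no $k$ disjoint cycles admits a \emph{feedback vertex set} (a vertex set meeting all cycles) of size at most $c\,k\log k$, which I will establish by induction on $k$ (or, equivalently, by taking a counterexample minimal in $|V|+|E|$). The first step is to reduce to minimum degree at least $3$. Passing to multigraphs, I would exhaustively delete vertices of degree at most $1$ and \emph{suppress} vertices of degree $2$, that is, replace the two incident edges by a single edge; this may create loops or parallel edges, but these count as cycles of length at most $2$ and cause no trouble. Each operation leaves the maximum number of vertex-disjoint cycles unchanged and alters the minimum feedback vertex set size only in a controlled, reversible way, so it suffices to treat a multigraph $H$ with $\delta(H)\ge 3$.

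The second ingredient is the \emph{short cycle lemma}: a multigraph with $\delta\ge 3$ on $n$ vertices contains a cycle of length at most $2\log_2 n+2$. I would prove this by growing a breadth-first tree from an arbitrary vertex $v$: if the girth were $g$, the ball of radius $\lfloor(g-1)/2\rfloor$ about $v$ would be acyclic, and since every non-root vertex has degree at least $3$ and hence at least two children, this ball would contain at least $2^{\lfloor(g-1)/2\rfloor}$ vertices. As this is at most $n$, we get $g\le 2\log_2 n+2$.

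With these in hand, the inductive step runs as follows. In a minimal counterexample $H$ we may assume $\delta(H)\ge 3$, so the short cycle lemma yields a cycle $C$ with $|V(C)|\le 2\log_2 n+2$. Since $H$ has no $k$ disjoint cycles, $H-V(C)$ has no $k-1$ disjoint cycles; being smaller, it is not a counterexample, so it has a feedback vertex set $X$ of size at most $f(k-1)$. Then $X\cup V(C)$ is a feedback vertex set of $H$ of size at most $f(k-1)+2\log_2 n+2$, which gives the recursion $f(k)\le f(k-1)+2\log_2 n+2$.

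The hard part is that this recursion only yields the bound $\bigO(k\log n)$, with the forbidden dependence on $n$ rather than $k$. To remove it I would split into two regimes according to the size of $n$ relative to $k$. When $n\le k^{2}$ we have $\log_2 n\le 2\log_2 k$, so the recursion closes to $f(k)=\bigO(k\log k)$ as desired. When $n> k^{2}$ the multigraph is so large relative to $k$ that one should be able to pack $k$ disjoint short cycles outright — repeatedly pull out a shortest cycle and re-core the remainder back to minimum degree $3$ — thereby contradicting the assumption that $H$ has no $k$ disjoint cycles. The genuine obstacle here is the \emph{cascade}: after deleting a short cycle, re-coring can shed many further vertices, so one must show the core does not erode below the threshold needed to locate the next short cycle over all $k$ rounds. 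I would handle this by an amortised argument that tracks the edge count (min degree $3$ forces $|E|\ge\tfrac32|V|$) rather than the vertex count, bounding the \emph{total} number of vertices and edges removed across the $k$ extractions rather than per step. Fitting the two regimes together, and in particular pinning down the threshold so that ``$n$ too large for a small feedback set'' forces ``$n$ large enough to pack $k$ cycles,'' is where the real work lies; the remaining steps are bookkeeping.
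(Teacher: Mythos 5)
The paper itself gives no proof of this statement: it is quoted from Erd\H{o}s and P\'osa, and the machinery the paper actually builds (frames plus Simonovits' Theorem~\ref{thm:simonovits}) is a refinement of exactly the strategy you outline. Your architecture is the classical Simonovits one and is sound: reduce to a multigraph core of minimum degree~$3$, use the breadth-first argument to find a cycle of length at most $2\log_2 n+2$, and split into regimes by the size of $n$ relative to $k$. The small regime also closes correctly: if $n\le k^2$ the recursion gives $f(k)\le f(k-1)+4\log_2 k+2$, and if the graph left after deleting a short cycle happens to be large relative to $k-1$, packing $k-1$ cycles there together with the deleted cycle yields $k$ disjoint cycles, a contradiction, so the regime analysis is consistent. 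The genuine gap is the step you yourself flag: the claim that a minimum-degree-$3$ multigraph on $n>k^2$ vertices contains $k$ disjoint cycles is not ``bookkeeping around the real work'' --- it \emph{is} the theorem in disguise. It is precisely Simonovits' Theorem~\ref{thm:simonovits}, which in fact needs only $s_k=4k(\log k+\log\log k+4)=\bigO(k\log k)$ vertices, far below your $k^2$ threshold; without a proof of it your argument reduces the Erd\H{o}s--P\'osa theorem to an unproved statement of essentially the same strength.

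The good news is that the cascade you worry about can be tamed more cheaply than by edge-count amortisation, using minimality of the extracted cycle. A shortest cycle $C$ is induced, and once the girth exceeds~$4$ no vertex outside $C$ has two neighbours on $C$: otherwise the two-edge path through that vertex, together with the shorter arc of $C$ between its two neighbours, would produce a shorter cycle. Hence deleting $V(C)$ lowers at most $|C|$ outside vertices from degree at least~$3$ to degree~$2$; re-coring then only \emph{suppresses} these vertices rather than triggering a deletion cascade, so each extraction round costs at most $2|C|\le 4\log_2 n+4$ vertices. Over $k$ rounds the total loss is $\bigO(k\log n)$, which is negligible against $n>k^2$, so $k$ short cycles can indeed be pulled out; the degenerate multigraph cases (girth at most~$4$, loops, parallel edges) are even easier since such cycles occupy at most four vertices. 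With this lemma inserted --- or with your amortised edge count carried out in full, which also works but takes more care --- your proof closes. As submitted, the decisive quantitative step is asserted, not proved.
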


The result of Erd\H os and P\'osa was the starting point for a series of 
articles that adapted the theorem to new settings or generalised it in various
directions. Two of these directions are the extension
to long cycles and to cycles through a specific vertex set.

\begin{theorem}[Fiorini and Herinckx~\cite{FH13}]\label{Birmelthm}
For any graph and any integer $\ell$, the graph either contains $k$ disjoint
cycles of length at least $\ell$  or a vertex set 
of size $\bigO(\ell\cdot k\log k)$
that meets all such cycles.\sloppy
\end{theorem}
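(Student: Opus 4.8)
The plan is to reduce the long-cycle statement to the ordinary Erd\H{o}s--P\'osa theorem (Theorem~\ref{epthm}) by means of a depth-first search tree, paying a factor of $\ell$ in the bound. Fix a connected graph $G$ (treating each block separately if convenient) and compute a DFS tree $T$ rooted at some vertex. The decisive feature of DFS is the absence of \emph{cross edges}: every non-tree edge joins a vertex to one of its ancestors. Writing $d(v)$ for the depth of $v$ in $T$, each back edge $uv$ (with $u$ an ancestor of $v$) closes a fundamental cycle of length $d(v)-d(u)+1$. More importantly, the no-cross-edge property forces every cycle $C$ to be \emph{nested}: if $a$ is the vertex of $C$ closest to the root, then all of $C$ lies in the subtree $T_a$, and $C-a$ is a single path confined to one child-subtree of $a$; iterating gives $C$ a rigid ``branch-within-branch'' structure. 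I would exploit this nesting to control how the length of $C$ is distributed over the depth levels it meets.

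The core reduction is to contract, inside each subtree of $T$, the tree along windows of $\ell$ consecutive depth levels, producing an auxiliary graph $H$. The intention is that a \emph{tall} long cycle---one whose vertices span a tree path of length at least $\ell$---survives as an ordinary cycle of $H$, while genuinely short cycles collapse into loops that we discard. Applying Theorem~\ref{epthm} to $H$ then yields either $k$ disjoint cycles in $H$ or a hitting set of size $\bigO(k\log k)$. Since each vertex of $H$ is a contracted window of at most $\ell$ original vertices, lifting the hitting set back to $G$ costs exactly the factor $\ell$, giving $\bigO(\ell\cdot k\log k)$, and lifting the disjoint cycles recovers $k$ disjoint long cycles of $G$. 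A pleasant feature of this route is that the $\log k$ term is \emph{inherited} from Theorem~\ref{epthm} rather than reproved.

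The main obstacle is that not every long cycle is tall. The nesting analysis shows that a cycle can be \emph{bushy}: confined to a depth window of height less than $\ell$, yet, by branching repeatedly into disjoint subtrees, still containing at least $\ell$ vertices. Such a cycle would collapse under the window contraction and be missed, so the reduction above is incomplete as stated. I would treat bushy cycles separately, using that within a single height-$\ell$ window the relevant structure has bounded vertical complexity, so that a bushy long cycle can either be added to the packing or be destroyed by deleting a bounded number of vertices per window, these being folded into the hitting set. Making the two regimes---tall cycles handled by contraction-plus-Theorem~\ref{epthm}, bushy cycles handled locally---combine into a single hitting set of size $\bigO(\ell\cdot k\log k)$, without double counting and while keeping the two packings disjoint, is where the real difficulty lies; the no-cross-edge nesting property is the tool that I expect to make both regimes simultaneously tractable.
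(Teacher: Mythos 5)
Your sketch has two genuine gaps, and the one you flag yourself is not a loose end but the entire difficulty. First, the \emph{bushy} regime. Your worry is well founded: in a DFS tree, a cycle confined to $h$ depth levels can have length roughly $2^h$ (at its shallowest vertex $a$ the path $C-a$ splits at its own shallowest vertex into two subpaths in distinct child subtrees, and this splitting nests; a complete binary tree of height $h$ with suitable back edges realises the bound). So long cycles can live inside a window of height only $O(\log\ell)$ and are invisible to your contraction. But your proposed remedy --- ``deleting a bounded number of vertices per window'' --- cannot give a hitting set bounded in $k$ and $\ell$: the tree has unboundedly many height-$\ell$ windows, so any per-window deletion scales with $n$, not with $k\log k$. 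Worse, inside a single window the structure does \emph{not} have ``bounded vertical complexity'' in any exploitable sense: a height-$\ell$ window may contain arbitrarily many pairwise disjoint bushy long cycles, and deciding how many to pack versus hit is essentially the original Erd\H{o}s--P\'osa problem for long cycles all over again, now in graphs of bounded DFS-height --- no easier than what you started with. Second, even the \emph{tall} regime leaks: a cycle of the contracted graph $H$ need not lift to a long cycle of $G$. A triangle straddling a window boundary survives contraction as a cycle on two window-vertices, so the $k$ disjoint cycles that Theorem~\ref{epthm} hands you in $H$ may lift to short cycles of $G$; shifted or overlapping layerings can repair the hitting-set direction of such reductions, but not the packing direction, and you give no mechanism for it.

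For comparison, the paper does not prove Theorem~\ref{Birmelthm} by any tree-decomposition reduction; it obtains it as the special case $S=V(G)$ of Theorem~\ref{thm:main}, following Simonovits. One grows a \emph{frame} $H$ (a subdivision of a cubic multigraph in which, by construction, every cycle is long), maintains alongside it a candidate hitting set $X$ consisting of radius-$\tfrac{\ell-1}{2}$ balls around branch vertices and around selected vertices on the paths of $H$ --- the wideness of $X$ is what forces every cycle created by an augmentation to be long --- and either finds a long cycle in $G-X$ with which to increase the score of $H$, or outputs $X$. The factor $\ell$ enters as the ball radius, and the factor $\log k$ comes from Simonovits' Theorem~\ref{thm:simonovits} on cubic multigraphs, so the paper also ``inherits'' the $\log k$ from a black box, but one that is compatible with the length constraint from the start, which is precisely what your window contraction fails to achieve.
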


That there is always such a \emph{hitting set}, a vertex set  meeting all 
cycles of length at least $\ell$, of a size depending only on $k$ and $\ell$
is a consequence of a more general result by Robertson and Seymour~\cite{RS86}. 
The bound on the hitting set was subsequently improved by Thomassen~\cite{Tho88}, 
followed by Birmel{\'e}, Bondy and Reed~\cite{BBR07}, until Fiorini and Herinckx~\cite{FH13} 
established the currently best bound stated above.

Kakimura, Kawarabayashi and Marx~\cite{KKM11} were the first to extend the Erd\H os-P\'osa
theorem to \emph{$S$-cycles}, the cycles in a graph that each contain a vertex
from a given vertex set $S$. 
The bound on the hitting set in the theorem below is due to
Pontecorvi and Wollan~\cite{PW12}.

\begin{theorem}[Kakimura et al.~\cite{KKM11}, and Pontecorvi and Wollan~\cite{PW12}]\label{paulsthm}
For any graph and any vertex subset $S$, the graph either contains $k$ disjoint
$S$-cycles  or a vertex set of size $\bigO(k\log k)$ that meets all  $S$-cycles.\sloppy
\end{theorem}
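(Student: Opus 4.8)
The plan is to treat this as an \emph{$S$-restricted} version of Theorem~\ref{epthm} and to reduce to that theorem as far as possible. A cheap first reduction: vertex-disjoint $S$-cycles meet $S$ in pairwise distinct vertices, so at most $|S|$ of them can be disjoint; if $|S|<k$ then no packing of size $k$ exists and $S$ itself is a hitting set of size $<k=\bigO(k\log k)$. Hence I may assume $|S|\ge k$. The real difficulty relative to Theorem~\ref{epthm} is that cycles living entirely inside $G-S$ are irrelevant — we must pack and cover only the cycles touching $S$ — so a naive contraction of $G-S$ fails, since a single component of $G-S$ can host many disjoint $S$-cycles and contracting it to a point destroys the packing.

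The spine of the argument is a tree-width dichotomy: there is a function $g(k)$ such that either $\mathrm{tw}(G)\le g(k)$, or $G$ contains a large wall. In the wall case I would analyse how $S$ reaches the wall: if many vertex-disjoint paths run from $S$ to the wall, I would route $k$ disjoint $S$-cycles, each capturing a distinct vertex of $S$, entering the wall along its path and closing up through disjoint bricks; if instead a bounded separator cuts $S$ off from the wall, I delete that separator and fall back to the bounded-tree-width case. Turning ``many disjoint $S$-paths to a wall'' into ``$k$ genuinely disjoint $S$-cycles'' is the step I expect to be the main obstacle, and it is precisely where the flat-wall and irrelevant-vertex machinery of the graph-minors theory must be brought in.

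On the bounded-tree-width side I would aim only for a fractional or half-integral guarantee — either $\Theta(k)$ disjoint $S$-cycles or a vertex set of size $\bigO(k)$ meeting all $S$-cycles fractionally — which a dynamic program along the decomposition, or LP-duality between fractional packing and covering of $S$-cycles, should supply. The single logarithmic factor in the final bound should then come not from the structure theorem but from rounding this fractional guarantee to an integral one: I would iterate the scheme for $\bigO(\log k)$ rounds, in each round either extracting the desired disjoint $S$-cycles or deleting $\bigO(k)$ vertices that roughly halve the fractional packing value, so that the deleted vertices total $\bigO(k\log k)$ — the same mechanism underlying the tight bound in Theorem~\ref{epthm}. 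Assembling the separators from the wall analysis with the hitting set from the bounded-width rounds and checking that the sizes stay $\bigO(k\log k)$ completes the plan; the points I am least confident about are keeping the wall-to-$S$ linkage both disjoint and plentiful enough to close $k$ (rather than merely $k/2$) cycles, and extracting a genuine $\log k$ instead of a $\log n$ from the rounding step.
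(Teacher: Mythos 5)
The decisive gap is the rounding step, which is exactly where your whole bound lives. Your $\log k$ factor rests on the claim that one can iterate for $\bigO(\log k)$ rounds, in each round deleting $\bigO(k)$ vertices that halve the fractional packing value of $S$-cycles, and that this is ``the same mechanism underlying the tight bound in Theorem~\ref{epthm}.'' It is not: the $k\log k$ in Erd\H{o}s--P\'osa does not arise from fractional relaxation and rounding at all, but from the purely combinatorial fact (Theorem~\ref{thm:simonovits}) that every cubic multigraph on $s_k=\Theta(k\log k)$ vertices contains $k$ disjoint cycles. LP duality does give you, with no tree-width hypothesis whatsoever, a fractional cover of value $<k$ whenever no fractional packing of value $k$ exists; the entire difficulty is integralizing it, and standard rounding of a fractional cycle cover loses a factor of $\log n$, not $\log k$. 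Nothing in your plan supplies a lemma of the form ``some $\bigO(k)$ vertices halve $\nu^*$,'' and you flag this yourself as the point of least confidence. A second, smaller problem sits in the wall branch: a Menger linkage of $k$ disjoint $S$--wall paths does not close cycles, since an $S$-cycle through a vertex $s$ off the wall needs \emph{two} openly disjoint paths from $s$ to the wall. Gallai/Mader-type $S$-path theorems repair this, but only half-integrally --- which is actually harmless for an Erd\H{o}s--P\'osa statement (run the argument with $2k$ and absorb the constant), so unlike the rounding step this gap is patchable. The flat-wall and irrelevant-vertex machinery you invoke is the wrong tool here in any case: $S$-cycles carry no parity or topological constraint that would require flatness, and importing the structure theorem inflates all constants far beyond $\bigO(k\log k)$, since the tree-width threshold $g(k)$ from the grid/wall theorem is at best polynomial in $k$, while your bounded-width branch must still produce a cover of size $\bigO(k\log k)$ rather than $\bigO(g(k))$.

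For contrast, the paper never touches structure theory. Theorem~\ref{paulsthm} is obtained as the special case of Theorem~\ref{thm:main} with constant $\ell$ (every cycle has length at least~$3$, so every $S$-cycle is ``long'' and the bound $\bigO(\ell\cdot k\log k)$ collapses to $\bigO(k\log k)$), and Theorem~\ref{thm:main} is proved by the elementary Simonovits strategy: grow a frame $H$ --- a subdivision of a cubic multigraph all of whose cycles are long $S$-cycles --- together with pendant cycles; if $H$ acquires $s_k$ branch vertices, Theorem~\ref{thm:simonovits} yields $k$ disjoint cycles of the frame, which are automatically $S$-cycles; otherwise a wide set $X$ of size $\bigO(\ell\cdot k\log k)$ built around branch vertices, $S$-vertices on frame paths, and pendant attachments either hits every long $S$-cycle or a surviving cycle lets one augment the pair to strictly larger (lexicographic, bounded) score, so the process terminates. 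Your opening reduction (if $|S|<k$, output $S$ itself) is fine and in fact mirrors the preprocessing in the paper's algorithmic proof, but the core of your plan, as it stands, would at best reprove the qualitative Erd\H{o}s--P\'osa property with a polynomial, structure-theorem-dependent bound in the style of Kakimura, Kawarabayashi and Marx; the $\bigO(k\log k)$ of Pontecorvi and Wollan, like the bound in this paper, comes from the elementary counting route, not from the dichotomy you propose.
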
 

In this article we bring these two divergent directions together. 
That is, we prove the following theorem, which extends both Theorems~\ref{Birmelthm}
and~\ref{paulsthm}.

\begin{theorem}\label{thm:main}
Let $k$ and $\ell$ be integers. For any graph $G$ and any subset of vertices $S$ one of the following holds:
\begin{enumerate}[\rm (a)]
	\item there exist $k$ vertex-disjoint $S$-cycles of length at least $\ell$, or
	\item there is a set $X$ with $|X| = \bigO(\ell \cdot k \log k)$ such that $G-X$ does not contain any $S$-cycle of length at least $\ell$.
\end{enumerate} 
\end{theorem}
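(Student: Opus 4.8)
Neither Theorem~\ref{Birmelthm} nor Theorem~\ref{paulsthm} seems to reduce to the other in a black-box fashion: ``length at least $\ell$'' is a global quantity that cannot be expressed as ``passes through a fixed vertex'', and although subdividing each vertex of $S$ into a path of length $\ell$ turns every $S$-cycle into a long cycle, it does nothing to suppress the long cycles already living inside $G-S$. I would therefore carry out the Erd\H os--P\'osa machinery for the two constraints \emph{simultaneously}, using Theorems~\ref{Birmelthm} and~\ref{paulsthm} as a guide for the two extreme cases rather than as black boxes.

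The outer loop is the standard iterative peeling that produces the $\log k$ factor, exactly as in Theorem~\ref{epthm}. It suffices to prove a single-cycle lemma: there is a function $h$ so that whenever $G$ admits no hitting set of the appropriate (logarithmic-in-$k$) size, one can find \emph{one} $S$-cycle of length at least $\ell$ spanning only $\bigO(\ell)$ vertices. Deleting this cycle, recursing with $k-1$, and summing the per-step cost $\bigO(\ell)$ over the $\bigO(k\log k)$ iterations then yields a hitting set of size $\bigO(\ell\cdot k\log k)$, which is case~(b), unless at some stage we have accumulated $k$ disjoint long $S$-cycles, which is case~(a).

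To prove the single-cycle lemma I would use a structural dichotomy. If the part of $G$ reachable from $S$ has small treewidth, a direct dynamic-programming/duality argument on a tree-decomposition either extracts a long $S$-cycle on few vertices or exhibits a small separator to be folded into the hitting set; the length constraint is handled by tracking, in each bag, the lengths of the partial $S$-paths, and the $S$-constraint by the connectivity and flow techniques underlying Theorem~\ref{paulsthm}. If instead the treewidth is large, then by known structural facts (the circumference/treewidth bound, or the grid--minor theorem) $G$ contains a large highly connected region through which long cycles can be routed; combining this with a Menger-type argument between $S$ and that region yields either many disjoint $S$-to-region paths, each closable through a fresh length-$\ell$ portion of the region to give a disjoint long $S$-cycle, or a small $S$-separator.

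The main obstacle is precisely this single-cycle lemma, where the two constraints interact: a long $S$-cycle must simultaneously reach $S$, be anchored there, and accumulate length at least $\ell$, all while spanning only $\bigO(\ell)$ vertices so that the peeling telescopes to the claimed bound. Keeping the removed cycle this small --- rather than merely of size bounded by some function of $k$ and $\ell$ --- is what forces a careful, flow-based construction in place of a crude appeal to the grid theorem, and it is where I expect essentially all of the difficulty (and the clean algorithmic running time) to reside.
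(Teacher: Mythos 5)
There is a genuine gap, and it sits exactly where you predicted: the single-cycle lemma is false as stated, and the surrounding accounting is inconsistent. A peeling recursion $k\to k-1$ runs for at most $k$ iterations (after $k$ successful peels you already hold $k$ disjoint long $S$-cycles), not $\bigO(k\log k)$; so deleting $\bigO(\ell)$ vertices per step would produce a hitting set of size $\bigO(\ell k)$. For $\ell\le 3$ and $S=V(G)$, long $S$-cycles are simply all cycles, and a hitting set of size $\bigO(k)$ would contradict the $\Omega(k\log k)$ lower bound coming from the original probabilistic construction of Erd\H{o}s and P\'osa (graphs of girth $\Theta(\log n)$ with no $k$ disjoint cycles and no small feedback vertex set). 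The same graphs refute the lemma directly: they admit no hitting set of size $\bigO(\ell\log k)$, yet contain no cycle on $\bigO(\ell)$ vertices at all, since every cycle has length $\Omega(\log k)$. Any correct peeling must therefore delete cycles of length $\bigO(\ell\log k)$ per step, and whether even $\bigO(\ell+\log k)$ per step is achievable is precisely the open conjecture stated at the end of the paper --- it cannot be assumed. Your fallback machinery does not rescue the bound either: a treewidth/grid-minor dichotomy yields constants that dwarf $\bigO(\ell\cdot k\log k)$, and the flow-based small-treewidth case is left entirely unproved.

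The paper's proof takes a different route that avoids this trap altogether. Rather than peeling, it grows a Simonovits-style \emph{frame} $H$ (all degrees $2$ or $3$, every cycle of $H$ a long $S$-cycle) together with pendant cycles, and builds a candidate hitting set $X$ of size $\bigO(\ell\cdot k\log k)$ out of radius-$\tfrac{\ell-1}{2}$ balls around branch vertices, around the first and last $S$-vertices of each subdivided edge, and around the attachment vertices of pendant cycles. Either $X$ already hits all long $S$-cycles, or a surviving long $S$-cycle is used to augment the pair $(H,\mathcal K)$ in a lexicographic score, which is bounded above; once $H$ has $s_k$ branch vertices, Theorem~\ref{thm:simonovits} extracts $k$ disjoint cycles from the underlying cubic multigraph, and these are automatically long $S$-cycles because all cycles of the frame are. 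Note that the inductive/peeling strategy you propose is essentially that of Fiorini and Herinckx, whose hard part is the base case of Birmel\'e, Bondy and Reed (graphs without two disjoint long cycles); the paper's stated contribution is to \emph{avoid} that difficulty, which your sketch would instead have to resolve and extend to $S$-cycles.
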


Pontecorvi and Wollan  describe an $\bigO(mn)$-time algorithm 
that returns one of the two possible outcomes of their Theorem~\ref{paulsthm}. 
Our proof is also of algorithmic nature.

\begin{theorem}\label{thm:algo}
Let $f(\ell) = 2^{2\ell} (2\ell)!$ for positive integers $\ell$.
There is an algorithm that, on input of a graph $G$, a vertex subset $S$ of size $s$ and
integers $k$ and $\ell$, computes in time 
$\bigO(k \log k \cdot s^2 \cdot (f(\ell) \cdot n+m))$
one of the two outcomes {\rm (a), (b)} of Theorem~\ref{thm:main}.
\end{theorem}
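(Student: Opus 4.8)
The plan is to read off the algorithm from the proof of Theorem~\ref{thm:main}, which I expect to be constructive throughout: whenever that proof asserts the existence of an object---an $S$-cycle of length at least~$\ell$, a separator, or a piece of the hitting set~$X$---it should do so by exhibiting one. Granting this, the theorem reduces to bounding the running time of each step of the existence proof and summing the contributions. I anticipate that proof to have the shape of the Erd\H os--P\'osa recursion underlying Theorems~\ref{epthm} and~\ref{paulsthm}: at each stage one either enlarges a partial packing by a single freshly found $S$-cycle of length at least~$\ell$, or one extracts a bounded part of the eventual set~$X$ and recurses on a reduced graph. The balanced way in which such a recursion splits the target~$k$ is what keeps both the number of stages at $\bigO(k\log k)$ and the final hitting set at $\bigO(\ell\cdot k\log k)$, matching Theorem~\ref{thm:main}. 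My first move is therefore to isolate the single operation that dominates a stage and to bound it on its own.

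That operation is the following: given a graph~$H$ arising during the recursion, decide whether $H$ has an $S$-cycle of length at least~$\ell$ and, if so, return one. This is the only point at which the length bound~$\ell$ and the prescribed set~$S$ interact, and it is where the factor $f(\ell)=2^{2\ell}(2\ell)!$ originates. I would implement it by \emph{colour coding}. The key point is that, although an $S$-cycle of length at least~$\ell$ can itself be arbitrarily long, one should never have to search for all of it at once: the structural analysis behind Theorem~\ref{thm:main} already localises the search to bounded witnesses, namely $S$-paths on at most $2\ell$ vertices between a prescribed pair of terminals, which are then assembled into the cycle. A colourful-witness search on $2\ell$ colours finds such a path whenever one exists, and its derandomisation costs $f(\ell)\cdot n$ per graph. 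Running over the $\bigO(s^2)$ choices of terminal pair from~$S$ that anchor the two arcs of the cycle---this is where the dependence on~$S$ concentrates---and adding the $\bigO(m)$ for the ambient traversal gives a per-stage cost of $\bigO(s^2(f(\ell)\,n+m))$; multiplying by the $\bigO(k\log k)$ stages yields the claimed running time.

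The surrounding bookkeeping I expect to be routine to make effective, precisely because the existence proof hands it to us. The separators and partial hitting sets it produces are defined explicitly from the cycle just found together with a bounded neighbourhood of it, so each is computable within the additive budget of its stage; deleting them and forming the reduced instance costs only linear time; and certifying outcome~(b) amounts to running the single-cycle subroutine once more on the final reduced graph and reporting failure. Correctness is inherited from Theorem~\ref{thm:main}, so the only genuinely new content is the timing analysis just described.

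The step I expect to be the real obstacle is the long-cycle search in isolation: because a long $S$-cycle may have length $\Omega(n)$, it lies well beyond what a parameter-bounded path search can detect directly, and everything hinges on justifying the reduction to witnesses on at most $2\ell$ vertices. I would argue that either the search for such a bounded witness succeeds, or its failure is itself a certificate that feeds the hitting-set construction, so that no unbounded search is ever required. Establishing that this dichotomy respects both the $S$-membership and the vertex-disjointness constraints, and that it survives the passage to the reduced graphs produced by the recursion, is the part I expect to demand the most care; it is in essence the effective form of the $\ell$-dependent structural core of Theorem~\ref{thm:main}.
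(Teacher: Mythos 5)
There is a genuine gap, and it sits exactly where you yourself locate it: the subroutine that finds an $S$-cycle of length at least $\ell$. You propose to localise the search to ``colourful witnesses'' on at most $2\ell$ vertices and detect them by colour coding, asserting that either this bounded search succeeds or its failure certifies something for the hitting set. No such localisation is established in your argument, and none is available in general: a cycle of length at least $\ell$ through a prescribed vertex need not contain any bounded-size witness of the kind you describe (if it did, long-cycle detection would reduce to plain colour coding of paths on $\le 2\ell$ vertices, which it does not). The paper does not attempt this; it invokes, as a black box, an algorithm of Bodlaender~\cite[Thm.~5.3]{Bod93} that finds a cycle of length at least $\ell$ through a \emph{single} prescribed vertex in time $\bigO(2^{2\ell}(2\ell)!\cdot n+m)$, and runs it once for each vertex of $S$. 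The factorial in $f(\ell)=2^{2\ell}(2\ell)!$ is the signature of that DFS-based algorithm, not of a derandomised colour-coding bound; your proposal would not reproduce this $f$. The dichotomy you defer (``failure of the bounded search is itself a certificate'') is precisely the content that is missing, so the proof is incomplete at its central step.

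Your outer-loop accounting is also not the paper's, and is unsupported as stated. The proof of Theorem~\ref{thm:main} is not a recursion that splits $k$ and recurses on reduced graphs; it iteratively augments a frame-with-pendant-cycles pair $(H,\mathcal K)$, recomputing the candidate hitting set $X$ from scratch at every step, and the number of iterations is controlled by the lexicographic score via~\eqref{scorebound}: at most $s_k(k+s)$ iterations, which is $\bigO(k\log k\cdot s)$ only after the preprocessing step that you omit --- if $s\le k$, output $X=S$ and stop. Your claimed $\bigO(k\log k)$ stage count has no justification from the score argument, and your $s^2$ per stage (pairs of terminals in $S$) does not correspond to anything in the proof; in the paper the $s^2$ factors as $s$ (iterations, via the second score coordinate) times $s$ (one Bodlaender run per vertex of $S$ per iteration). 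Your final bound matches the theorem only because these two misallocations cancel. The remaining bookkeeping you describe (computing $X$ in $\bigO(\ell n)$ time, updating $(H,\mathcal K)$ in $\bigO(n)$ time using that $H\cup C$ has maximum degree~$4$) is indeed routine and agrees with the paper.
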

Note that our algorithm runs in FPT time when parameterized by $\ell$.
In fact, the factor $f(\ell)$ comes from the subroutine of finding cycles of length at least $\ell$.
Since this problem is NP-hard in general, we cannot expect a running time which is polynomial in $\ell$.


\medskip
We briefly discuss some of the research initiated by the Erd\H os-P\'osa theorem.
A family $\mathcal H$ of graphs is said to have the \emph{Erd\H{o}s-P\'osa property}
if there is a function $f:\mathbb N\to\mathbb N$ so that 
any graph either contains $k$ disjoint subgraphs that are isomorphic to graphs in $\mathcal H$,
or if it contains a  vertex set of size $f(k)$ meeting all such subgraphs.
Long cycles and, stretching the definition a bit, $S$-cycles are just two of many 
examples having the Erd\H{o}s-P\'osa property. Others include:
\begin{itemize}
	\item the family of cycles of length $0$~mod~$m$ for any integer $m\geq2$\\ (Thomassen~\cite{Tho88}),\vspace{-2mm}
	\item the family of cycles of length not equal to $0$~mod~$m$ for any odd integer  $m\geq 3$ (Wollan~\cite{Wol11}),\vspace{-2mm}
	\item the family of graphs that can be contracted to a specific planar graph (Robertson and Seymour~\cite{RS86}),\vspace{-2mm}
	\item and the family of all (directed) cycles in a digraph (Reed et al.~and Havet and Maia \cite{RRST96,HM13}).
\end{itemize}

Other natural classes of graphs, in contrast, fail to have the Erd\H{o}s-P\'osa property:
for example, the family of odd cycles, 
clique minors (graphs that can be contracted to a given complete graph $K_p$ with $p\geq 5$),
and the family of cycles of length $\ell$~mod~$m$ for any $\ell\not=0$ and even $m$.
Somewhat surprisingly,
this changes if high connectivity is imposed.
Indeed, in highly connected graphs, odd cycles do have the Erd\H{o}s-P\'osa property,
see Thomassen \cite{Tho01}, Rautenbach and Reed \cite{RR01}, and Kawarabayashi and Wollan \cite{KW06}; as do clique minors, see
Diestel et al.~\cite{DKW12}; and as do cycles with arbitrary modularity constraints, see
Kawarabayashi and Wollan \cite{KW06}.

Coming back to $S$-cycles, we note that there is a long-standing interest in 
cycles through a prescribed set of vertices.
Probably the best known result is due to Dirac \cite{Dir60} who proved that in every $k$-connected graph ($k\geq 2$),
there is a cycle containing any given set of $k$ vertices.
Bondy and Lov{\'a}sz \cite{BL81} investigated this further and proved 
that every non-bipartite $k$-connected graph ($k\geq 2$) has an odd cycle containing any set of $k-1$ vertices
and every $k$-connected graph ($k\geq 3$) has an even cycle containing any set of $k$ vertices.

Just as odd cycles, odd $S$-cycles do not have the Erd\H{o}s-P\'osa property
in general, but gain it in highly connected graphs; see~\cite{Joo14}.
For cycles in digraphs the situation is slightly different
as demonstrated by an example of Wollan (see Kakimura and Kawarabayashi \cite{KK12}):
while (directed) cycles 
in digraphs have the  Erd\H{o}s-P\'osa property, the property is lost
when cycles are replaced by $S$-cycles. Whether high connectivity restores the property
appears to be unknown.

\section{Preliminaries and short discussion}

We use standard graph theory notation as found in Diestel~\cite{Die10}. 

\medskip
The best known proof of the Erd\H os-P\'osa theorem is certainly due to Simonovits~\cite{Sim67}.
Indeed, both later proofs of the Theorems~\ref{Birmelthm} and~\ref{paulsthm}
rely on refinements of Simonovits' strategy. We will follow it as well. 

In his proof
Simonovits grows step by step
a subgraph $H$ of the graph $G$ that encapsulates at  the same time
a candidate hitting  as well 
as a set of disjoint cycles. 
The graph $H$ is a subdivision of a cubic multigraph, and it 
turns out that either $H$
has many vertices of degree~$3$, 
in which case there are many cycles, or there are few of them, which means
they may play the role of hitting set. 

That any such $H$ with many vertices of degree~$3$ yields many disjoint cycles
is due to the theorem below.
For an integer $k \ge 2$ let
\begin{align*}
	s_k= 4k(\log k + \log \log k +4),
\end{align*}
while we put $s_k=1$ for $k=1$. (The logarithm is base~$2$.)

\begin{theorem}[Simonovits~\cite{Sim67}]\label{thm:simonovits}
Every cubic multigraph with at least $s_k$ many vertices contains $k$ disjoint cycles.
\end{theorem}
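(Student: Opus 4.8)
The plan is to argue by contradiction through an extremal choice of cycles. Suppose a cubic multigraph $G$ on $n \ge s_k$ vertices contains no $k$ disjoint cycles, and let $\mathcal C = \{C_1,\dots,C_t\}$ be a \emph{largest} collection of pairwise vertex-disjoint cycles, so that $t \le k-1$; among all largest collections I would then pick one minimising the total length $\sum_i |C_i|$. Write $U = \bigcup_i V(C_i)$, put $u = |U|$, and let $F = G - U$. Maximality forces $F$ to be a \emph{forest}, since any cycle in $F$ could be added to $\mathcal C$. Minimality will be used only to keep the individual cycles short.

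The first main step is a degree count that pins $n$ to $u$. As every vertex of $G$ has degree three, $3u = 2e(U) + e(F,U)$, where $e(U)$ and $e(F,U)$ count the edges inside $U$ and between $U$ and $F$. The $t$ disjoint cycles already contribute $\sum_i |C_i| = u$ edges inside $U$, so $e(U) \ge u$ and hence $e(F,U) \le u$. On the other side, $3f = 2e(F) + e(F,U)$ with $f = |F|$, and a forest on $f$ vertices has at most $f-1$ edges, which yields $e(F,U) \ge f + 2$ whenever $F$ is non-empty. Combining the two inequalities gives $f < u$, and therefore the clean bound $n = u + f < 2u$.

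The second step is to show that each $C_i$ is short, namely $|C_i| \le 2\log_2 n + \bigO(1)$. The tool is the standard Moore/BFS bound: in a graph of minimum degree three a breadth-first search from any vertex either closes up a cycle within the first $\lceil \log_2 n\rceil$ levels or exhibits a strictly branching tree on more than $n$ vertices, which is impossible; hence the girth is at most $2\log_2 n + \bigO(1)$. I would apply this not to $G$ directly but to the residual graph obtained by deleting the other cycles' vertices, where minimality of the total length forces $C_i$ to realise the girth. Granting this, $u = \sum_i |C_i| \le (k-1)\,(2\log_2 n + \bigO(1))$, and feeding this into $n < 2u$ gives $n < 4(k-1)\log_2 n + \bigO(k)$. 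A short monotonicity calculation then shows that this is incompatible with $n \ge s_k$: the definition $s_k = 4k(\log k + \log\log k + 4)$ is calibrated exactly so that $s_k > 4(k-1)\log_2 s_k + \bigO(k)$, the $\log\log k$ summand being precisely what one needs to solve the implicit inequality $n \approx 4k\log_2 n$.

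The step I expect to be the real obstacle is the per-cycle length bound. Deleting the other cycles' vertices destroys three-regularity, since leaves of $F$ have degree one, so the Moore bound does not apply to the residual graph as it stands. The fix I would pursue is to first reduce that graph to its cubic core by repeatedly suppressing degree-two vertices and deleting vertices of degree at most one, check that this operation neither creates new short cycles nor merges the surviving cycle with any $C_j$, and then invoke the Moore bound on the resulting genuine cubic multigraph, which still has at most $n$ vertices. Making this reduction interact correctly with the minimality hypothesis, and tracking the additive constants carefully enough to land on the stated $s_k$ rather than a weaker bound, is where the bulk of the work lies.
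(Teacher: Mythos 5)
First, a point of reference: the paper does not prove this theorem at all --- it quotes it from Simonovits and only remarks that the proof can be made algorithmic. The known proof (essentially the version in Diestel's book, which is where the exact constant $s_k=4k(\log k+\log\log k+4)$ comes from) is an induction on $k$: pick a \emph{shortest} cycle $C$ in the cubic multigraph, which by the Moore bound has length at most $2\log_2 n+2$; delete $V(C)$ and suppress the resulting vertices of degree $2$ to restore a cubic multigraph on at least $n-2|C|$ vertices; recurse. Crucially, the cycle found in a later, compressed multigraph only needs to \emph{lift} to a cycle of $G$ disjoint from the previously extracted ones --- its length after lifting is irrelevant --- so the fact that suppression distorts lengths is harmless there.

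This is exactly where your argument has a genuine gap, at the step you yourself flag. Your minimality hypothesis pins $|C_i|$ to the girth of the residual graph $G-\bigcup_{j\neq i}V(C_j)$, but that graph can have minimum degree $1$ (vertices of $F$, and even of $C_i$, may send all spare edges to the deleted cycles), and the girth of such a graph is not $\bigO(\log n)$: a single long cycle with trees attached already has girth linear in $n$. Your proposed repair --- pass to the cubic core by suppressing degree-$2$ vertices and then invoke the Moore bound --- does not close the gap, because suppression \emph{shortens} cycles: the Moore bound controls the girth of the core in the core's own metric, and a core cycle of length $\bigO(\log n)$ lifts to a residual cycle whose true length is the sum of the suppressed paths, hence unbounded. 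Since your exchange argument must compare $|C_i|$ against the \emph{lifted} length, no contradiction with minimality results; in the degenerate case the short core cycle is $C_i$ itself with its attachments compressed away, and nothing is learned. Your Step 1 counting ($e(F,U)\geq f+2$, $e(F,U)\leq u$, hence $n<2u$) is correct, and the final calibration against $s_k$ would plausibly go through \emph{if} each $|C_i|\leq 2\log_2 n+\bigO(1)$ held --- but that per-cycle bound is the heart of the matter, it is false for the residual graph as you construct it, and the cubic-core fix transfers the bound in the wrong direction. The inductive scheme avoids this by never demanding that the extracted cycles be short in $G$, only pairwise disjoint; if you want to salvage a global extremal argument, you would need a replacement for the per-cycle length bound, e.g.\ a direct bound on the total length $u$, which your current tools do not provide.
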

We note that the proof of the theorem can be turned into 
an algorithm that runs in $\bigO(n)$-time.

For the proof of our main result we naturally borrow some arguments from Fiorini and Herinckx~\cite{FH13},
from Pontecorvi and Wollan~\cite{PW12}.
In particular, both pairs of authors, Fiorini and Herinckx, and Pontecorvi and Wollan
adapt Simonovits' graph $H$ so that it only contains cycles of the desired kind, 
that is, either long cycles or $S$-cycles. We will do the same and force $H$ to contain
only long $S$-cycles. Ensuring that this is still the case when we grow $H$ takes up 
the main effort of the proof. 

The inductive proof of
Fiorini and Herinckx relies on a result of 
 Birmel{\'e} et al.~\cite{BBR07}\footnote{
The main result of Birmel{\'e} et al.~in~\cite{BBR07} says that every graph without $k$ long cycles contains
a hitting set for long cycles of size $O(\ell k^2)$ proved by induction on $k$.}
for its base case: that every graph without two disjoint long cycles has 
a hitting set of size at most $2\ell+3$.
Strikingly, this case turns out to be the longest and most involved 
part in the argumentation of~\cite{BBR07}.\footnote{
While the result was recently improved by Meierling et al.~\cite{MRS14},
the proof still takes a substantial effort.
}
Moreover, the non-constructive nature of the proof makes it difficult 
to extract an algorithm from it. 

Rather than extending the proof of Birmel{\'e} et al.\ to long $S$-cycles,
we avoid this somewhat complicated part completely. In that way, we not 
only can present a shorter and simpler proof but also directly obtain an 
algorithm, with only a little extra work.

\section{The proofs}

In this section we assume that $G$ is a graph and $S$ is a subset of the vertices of $G$.
Moreover, let $k,\ell$ be positive integers.

For a vertex set (interpreted as a graph without edges) or subgraph $H$ of some graph, we call
a path an \emph{$H$-path} 
if the endvertices of the path are contained in $H$, while 
all internal vertices lie outside~$H$
and it contains at least one edge not belonging to $H$.
In particular, an $H$-path contains at least one edge.

We call a cycle (in $G$)  \emph{long} if its length is at least $\ell$.
Let $H$ be a subgraph of $G$.
A set $X\subseteq V(H)$ is \emph{wide} if any path in $H$ with first and last vertex in $H-X$
that contains a vertex of $X$ has length at least $\ell$.

A subgraph $H$ of $G$ is a \emph{frame} if 
\begin{itemize}
\item every vertex of $H$ has degree~$2$ or~$3$ in $H$; and
\item every cycle contained in $H$ is a long $S$-cycle.  
\end{itemize}
Any vertex of degree~$3$ of $H$ is a \emph{branch vertex}, and we usually denote the
set of branch vertices by $B$.

Step by step, we 
will make our frame larger. Here is a simple way to do just that.
\begin{lemma}\label{lem:augment}
Let $H$ be a frame, and let $X\subseteq V(H)$ be wide and containing
all branch vertices of $H$. 
Consider an $H$-path $P$ of $G-X$ that links two components of $H-X$.
If every cycle in $H\cup P$ (that passes through $P$) is an $S$-cycle, 
then $H\cup P$ is again a frame with more branch vertices than $H$.
\end{lemma}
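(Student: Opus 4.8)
The plan is to verify the two defining properties of a frame for $H\cup P$ directly, and then count branch vertices. Write $u,v$ for the endvertices of $P$. Since $P$ is a path of $G-X$ linking two components of $H-X$, its endvertices satisfy $u,v\in V(H)\sm X$ and lie in distinct components of $H-X$; in particular $u\neq v$, and as $X$ contains all branch vertices, neither $u$ nor $v$ is a branch vertex of $H$, so both have degree exactly $2$ in $H$.

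First I would check the degree condition. The internal vertices of $P$ lie outside $H$ and have degree $2$ in $H\cup P$. The endvertices $u,v$ each gain exactly one incident edge, so their degree rises from $2$ to $3$, while every other vertex keeps its $H$-degree. Hence every vertex of $H\cup P$ has degree $2$ or $3$, and moreover $u$ and $v$ are two branch vertices of $H\cup P$ that were not branch vertices of $H$. This already yields the assertion that $H\cup P$ has more branch vertices than $H$.

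It then remains to show that every cycle $C$ of $H\cup P$ is a long $S$-cycle. If $C$ uses no edge of $P$, then $C\subseteq H$, so $C$ is a long $S$-cycle because $H$ is a frame. Otherwise $C$ passes through $P$; since the internal vertices of $P$ have degree $2$ in $H\cup P$, the cycle $C$ must contain all of $P$, and hence decomposes as $C=P\cup Q$, where $Q$ is a $u$--$v$ path in $H$. By hypothesis such a $C$ is an $S$-cycle, so only its length is at issue.

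The key step is to bound the length of $Q$ via wideness. Because $u$ and $v$ lie in different components of $H-X$, the path $Q$ cannot avoid $X$; that is, $Q$ is a path in $H$ with first and last vertex in $H-X$ that contains a vertex of $X$. Wideness of $X$ then forces $Q$ to have length at least $\ell$, whence $C=P\cup Q$ has length at least $\ell$ as well, so $C$ is a long $S$-cycle. This completes the verification that $H\cup P$ is a frame. I expect this last step---recognising that passing through $P$ forces a crossing between two components of $H-X$, and therefore, through wideness, a long detour---to be the crux of the argument, while the degree bookkeeping is routine.
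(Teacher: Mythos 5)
Your proof is correct and follows essentially the same approach as the paper's (much terser) proof: wideness of $X$ forces any cycle through $P$ to be long, the hypothesis supplies the $S$-property, and the inclusion of all branch vertices in $X$ guarantees the degree condition at the endvertices of $P$. You merely spell out the bookkeeping--the decomposition $C=P\cup Q$ and the observation that $Q$ must cross $X$--that the paper leaves implicit.
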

\begin{proof}
Let $H'=H\cup P$.
Since $X$ is wide, every cycle in $H'$ is long.
Moreover, by assumption, every cycle in $H'$ is an $S$-cycle.
Finally, observe that $H'$ satisfies the degree condition because the branch vertices
of $H$ are contained in $X$.
\end{proof}

A frame $H$ might have \emph{pendant cycles}; that is, a set $\mathcal K$
of pairwise disjoint long $S$-cycles that each meet $H$ in precisely one vertex.

For a tuple $(H,\mathcal K)$ of a frame together with a set of pendant 
cycles we define its \emph{score} as the tuple $(|B|,|S\cap V(H)|+|\mathcal K|)$.
We order scores lexicographically, 
which means that $(H',\mathcal K')$ has larger score than $(H,\mathcal K)$ 
if either $H'$ has more branch vertices than $H$, or if they have the same 
number of branch vertices but the number of vertices in $S$ contained in $H'$
plus the number of cycles in $\mathcal K'$ is higher than for $(H,\mathcal K)$.

\begin{proof}[Proof of Theorem~\ref{thm:main}]

Inductively, we define pairs $(H,\mathcal K)$ 
of a frame $H$ together with a set $\mathcal K$ of pendant cycles
until we either find $k$ disjoint long $S$-cycles or a hitting set $X$ as in the theorem.
We start the construction with $(\emptyset,\emptyset)$. 

Now, assume such a pair $(H,\mathcal K)$ to be already constructed.
Let $\cL$ be the set of components of $H$ that are cycles, and let $B$ the 
set of branch vertices of $H$. We define a multigraph $\mathcal H$ 
on $B\cup \cL$ as vertex set
with edge set $\mathcal E$: let $\mathcal H-\cL$ be the cubic multigraph 
of which $H-\bigcup_{C\in \cL}C$ is a subdivision, and let each $C\in \cL$
be a loop of $\mathcal H$ that is incident with itself, seen as a vertex. 
Thus, any edge $P\in\mathcal E$ of $\mathcal H$ is either a $B$-path in $H$
or a cycle component of $H$.

We bound the size of $\mathcal H$. 
By Theorem~\ref{thm:simonovits}, there are $k$ disjoint long $S$-cycles in $G$
if $|B|\geq s_{k-|\cL|}$.
As $|\mathcal E|=\tfrac{3}{2}|B|+|\cL|$
and $s_k\geq s_{k-1}+ \frac{3}{2}$, 
we may assume that
\begin{equation}\label{sizeofSimo}
|B|< s_k\text{ and }|\mathcal E|< \frac{3}{2}s_{k-|\cL|}+|\cL|\leq \frac{3}{2}s_{k}.
\end{equation}
If $\mathcal K$ consists of at least $k$ cycles, then again we can obviously stop
as we require the cycles in $\mathcal K$ to be pairwise disjoint long $S$-cycles. 
So we may assume that
\begin{equation}\label{sizeofpendant}
|\mathcal K|< k.
\end{equation}
These estimations give us an upper bound on the score:
\begin{equation}\label{scorebound}
\text{the score of $(H,\mathcal K)$ is less than $(s_k,|S|+k)$.}
\end{equation}

Next, we define a wide vertex set $X$ that is a candidate  for the hitting set
sought for in the theorem. The set $X$ comprises three types of subsets, namely sets $X_b$
for every branch vertex $b$ of $H$, sets $X_P$ for every edge $P\in\mathcal E$ of $\mathcal E$
and finally sets $X_K$ for every pendant cycle $K\in\mathcal K$. We put
\begin{equation}\label{eqn:defX}
	X= \bigcup_{b\in B}X_b \cup \bigcup_{P \in \mathcal E}X_P \cup \bigcup_{K \in \mathcal K} X_K.
\end{equation}
An illustration of the different types making up $X$ is given in Figure~ \ref{Xfig}.

\begin{figure}[ht]
\centering
\begin{tikzpicture}[every edge quotes/.style={},scale=1]
\tikzstyle{hvertex}=[thick,circle,inner sep=0.cm, minimum size=2.5mm, fill=white, draw=black]
\tikzstyle{hedge}=[ultra thick]
\tikzstyle{Xset}=[line width=10pt,line cap=round, gray]
\tikzstyle{svertex}=[thick,circle,inner sep=0.cm, minimum size=1.5mm, fill=black, draw=black]

\node[hvertex,label=above right:$X_b$] (b1) at (0,1){};
\node[hvertex,label=above:$X_b$] (b2) at (9,0.5){};


\path (b1) -- +(0.9,-0.2) coordinate (bb1){};
\path (b2) -- +(-0.9,0.2) coordinate (bb2){};

\draw[hedge] (b1) -- +(-0.8,0.8)  node[midway] (x1){};
\draw[hedge] (b1) -- +(-0.9,-0.6) node[midway] (y1){};
\draw[hedge] (b1) -- (bb1) node[midway] (z1){};

\draw[hedge] (b2) -- +(0.9,0.3)  node[midway] (x2){};
\draw[hedge] (b2) -- +(0.3,-0.8) node[midway] (y2){};
\draw[hedge] (b2) -- (bb2) node[midway] (z2){};

\draw[hedge,rounded corners] (bb1) -- ++(0.8,0.1) -- ++(0.5,-0.4) 
node[near start] (ls1){} node[svertex,label=above right:$X_P$] (s1){} 
-- ++(1.1,-0.1) node[midway] (rs1){} 
-- ++(1.3,0.4) node[svertex,midway] {} -- ++(0.7,-0.1) node[near start] (lK){} 
node[hvertex,minimum size=1.5mm,label=above:$X_K$] (yK){} 
-- ++(1,0.3) node[midway] (rK){} -- ++(0.3,-0.2) node[svertex] {} 
-- ++(1,-0.2) node[midway] (ls2){} node[svertex,label=above:$X_P$] (s2){}  
-- (bb2) node[near end] (rs2){} ;

\draw[very thick] (yK) .. controls ++(-1,-1.5) and ++(1,-1.5) .. (yK) node[near start,svertex] {}; 

\begin{scope}[on background layer]
\draw[Xset] (b1) -- (x1);
\draw[Xset] (b1) -- (y1);
\draw[Xset] (b1) -- (z1);
\draw[Xset] (b2) -- (x2);
\draw[Xset] (b2) -- (y2);
\draw[Xset] (b2) -- (z2);

\draw[Xset] (rK) -- (yK) -- (lK);
\draw[Xset] (ls1) -- (s1) -- (rs1);
\draw[Xset] (ls2) -- (s2) -- (rs2);
\end{scope}

\end{tikzpicture}
\caption{Definition of $X$; vertices of $S$ in black}\label{Xfig}
\end{figure}
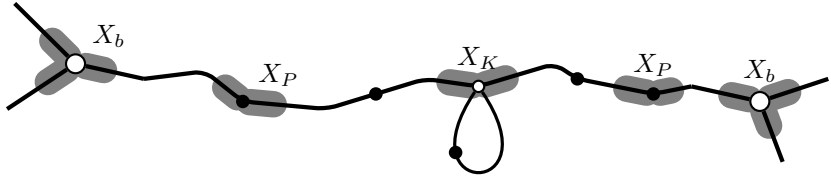

We define the different types, beginning with the branch vertices. 
For every branch vertex $b\in B$ of $H$, 
we let  $X_b$ be the set of vertices of distance at most $\tfrac{\ell-1}{2}$ to $b$ in $H$.
We note for later use that
\begin{equation}\label{sizeofXb}
\left|\bigcup_{b\in B} X_b\right| \le \tfrac{3\ell}{2}|B| \mbox{ and } B\subseteq X.
\end{equation}

Next, for each $P\in\mathcal E$ we define a set $X_P \subseteq V(H)$. If $P$ 
is disjoint from $S$ then we simply put $X_P=\emptyset$. 
If $P$ is a path that contains some vertex from $S$, we 
let $s_P$ be the first and $s'_P$ be the last vertex of $S$ in $P$.
In this case, we choose $X_P$ to be the set of  vertices of $P$ 
of distance at most $\frac{\ell-1}{2}$ from $\{s_P,s'_P\}$ in $P$.
Finally, if $P\in \cL$,
that is, if $P$ is a cycle component of $H$, 
then it has to contain a vertex of $S$, since every cycle of $H$ 
is an $S$-cycle. We pick some vertex $s_P$ and let $X_P$ again be the set
vertices of $P$ 
of distance at most $\frac{\ell-1}{2}$ from $s_P$ in $P$.
We note that in any of the cases $|X_P|\leq 2 \ell$.

Finally, for each $K\in\mathcal K$ there is, by definition, a unique vertex $y_K$
shared by $K$ and $H$. We define $X_K$ to be the set of vertices in $H$ of distance 
at most $\ell-1$ to $y_K$. Note that $|X_K\sm \bigcup_{b \in B} X_b|\leq 2\ell$.
We  observe that
\begin{equation}\label{sizeofX}
|X|\leq \frac{3}{2}\ell\cdot s_k+ 2\ell\cdot \frac{3}{2}s_k+ 2\ell\cdot k.
\end{equation}
This estimation follows from~\eqref{sizeofSimo},~\eqref{sizeofpendant}, and~\eqref{sizeofXb}. 
Note, moreover, that 
\begin{equation}\label{wideX}
\text{
$X$ and $X\sm \{y_K\}$, for all $K\in\mathcal K$, are wide sets.
}
\end{equation}

Having defined $X$, we observe that there are two possibilities. Either $G-X$
is devoid of long $S$-cycles, in which case we are done, or there is still
such a cycle. In that case, which will occupy the rest of the proof, 
we will  change $(H,\mathcal K)$ into a frame-pendant cycles 
pair $(H',\mathcal K')$ of higher score. The score, however, is bounded from above,
by~\eqref{scorebound}, which means that this procedure eventually
ends.

So let us consider a long $S$-cycle $C$ in $G-X$. We distinguish three cases, depending
on whether $C$ meets $H$ nowhere, in one vertex or in more vertices. 

The easiest case is when $C$ is disjoint from $H$.
If $C$ is in addition disjoint from the cycles in $\mathcal K$, then $H\cup C$ is a frame and 
$(H\cup C,\mathcal K)$
has higher score than $(H,\mathcal K)$, as the former contains more vertices from $S$.
Suppose $C$ intersects a cycle $K\in\mathcal K$.
Let $P$ be a path in $K$ joining $C$ and $y_K$.
Now $(H\cup P\cup C,\es)$ has a higher score than $(H,\mathcal K)$ as it contains two more branch vertices.
Note that there is no cycle in $H\cup P\cup C$ containing an internal vertex of $P$
and hence $H\cup P\cup C$ is a frame.

Next, assume that $C$ meets $H$ in precisely one vertex $y_C$. If $C$ is disjoint from 
any cycle in $\mathcal K$, we can add $C$ to $\mathcal K$. 
As then $(H,\mathcal K\cup\{C\})$ has higher score than $(H,\mathcal K)$
we are done again. Thus, assume there is some $K\in\mathcal K$ having 
a vertex with $C$ in common. Note that this cannot be the unique vertex $y_K$
of $K$ in $H$ as $X$ contains all such vertices. 

Pick some vertex $s\in (C\cup K)\cap S$, which exists as both are $S$-cycles. 
Then there exists an $H$-path $Q$ through $s$ in $C\cup K$: if $s\in \{y_C,y_K\}$
then any path in $C\cup K$ from $y_C$ to $y_K$ will do as $Q$; if $s\notin \{y_C,y_K\}$,
on the other hand, then $s$ cannot, in $C\cup K$, be separated by single vertex from $\{y_C,y_K\}$,
which means there is such a $Q$.

Applying Lemma~\ref{lem:augment}
to $Q$ and the set $X- y_K$, which is wide by~\eqref{wideX}, we see that $H\cup Q$
is a frame with more branch vertices than $H$. Thus $(H\cup Q,\emptyset)$
has higher score than $(H,\mathcal K)$.
This finishes the case of a unique
common vertex of $H$ and $C$. 

We turn to the remaining case: $C$ meets $H$ in at least two vertices. 
Since $C$ is an $S$-cycle, $C$ contains an $H$-path $Q^*$ 
through a vertex of $S$ or there is a vertex $s^*\in V(H)\cap V(C)$ such that the two neighbours of $s^*$ in $C$ coincide with two neighbours of $s^*$ in $H$.
In the latter case, we denote the trivial path starting and ending in $s^*$ also by $Q^*$.

If $Q^*$ links two components of $H-X$ then we apply 
Lemma~\ref{lem:augment} again to see that $H\cup Q^*$ is a frame with more 
branch vertices than $H$. Consequently, $(H\cup Q^*,\emptyset)$ has higher 
score than $(H,\mathcal K)$ and we are done. 

Therefore, $Q^*$ meets a single component $D$ of $H-X$.
Since all branch vertices of $H$ are contained in $X$, 
the component $D$ of $H-X$ is a subset of some $P^*\in\mathcal E$. 
As an $H$-path, $Q^*$ meets $H$ and thus $P^*$ in precisely its endvertices;
let these be $q_1^*,q_2^*$
(if $Q^*$ is not an $H$-path but a trivial path, then let $q_1^*=q_2^*=s^*$).

Suppose first that there is a $K\in \cK$ such that $Q^*$ and $K$ intersect.
Recall that $y_K\in X$ and hence $y_K \notin V(Q^*)$.
Picking any $s\in S$ in the $S$-cycle $K$, we see that, in $Q^*\cup K$,  
there is an $H$-path $P$ through $s$
starting in one of $\{q_1^*,q_2^*\}$ and ending in $y_K$.
Since $X-y_K$ is wide, we conclude by Lemma \ref{lem:augment} that $H\cup P$ is a frame and
$(H\cup P,\emptyset)$ has a higher score than $(H,\cK)$.

Hence, we may assume that $Q^*$ meets no cycle in $\cK$.
If $q_1^*P^*q_2^*$ does not contain an element of $S$,
then let $H'$ arise from $H$ by replacing the path $q_1^*P^*q_2^*$ by $Q^*$,
which results in $(H',\cK)$ having a higher score than $(H,\cK)$.

Therefore, we may assume that $D\cap S\not=\es$.
In particular, $P^*$ contains at least three vertices from $S$ and hence $D$ lies in $P^*$ within $s_{P^*}$ and $s_{P^*}'$.
If $C$ 
contains vertices from $H-D$, then $C$ also contains an $H$-path $Q$
from $H-D$ to $D$. 
Observe that any cycle in $H\cup Q$ that passes through $Q$ contains a vertex from $\{s_{P^*},s_{P^*}'\}\subset S$, 
which in turn 
lets us apply Lemma~\ref{lem:augment} again to deduce that $H\cup Q$ 
is a frame with more branch vertices. Since 
$(H\cup Q,\emptyset)$ has higher 
score than $(H,\mathcal K)$, we may assume that $C$ meets $H$ only in $D$.

Let $r_1$ and $r_2$ be the first and the last vertex of $P^*$ belonging to $C$.
In $H$, 
replace the subpath $r_1P^*r_2$ by $C$ in order to obtain a graph $H'$ of minimal
degree~$2$ and maximal degree~$3$. Moreover, as $X$ is wide, any cycle in $H'$ is long. 
Let us check that all cycles in $H'$ are $S$-cycles. This is clearly the case for $C$
and for any cycle that avoids $C$. 

Any cycle that meets $C$, other than $C$ itself, 
 contains both of $s_{P^*}$ and $s_{P^*}'$ and is thus an $S$-cycle. 
Consequently, $H'$ is a frame.
However, $H'$ has two more branch vertices, namely $r_1,r_2$,
than $H$. Again $(H',\emptyset)$ has higher score than $(H,\mathcal K)$.
\end{proof}

Before we sketch how the proof  can be turned into an algorithm let us note that 
the hitting set has size at most
\begin{equation}\label{eqn:truebound}
|X| \le \frac{9}{2}\ell s_k+2\ell k = 18\ell k (\log k +\log\log k +37/9).
\end{equation}
While, with a bit of effort, this bound can certainly be improved somewhat,  we did 
not see how to lower it substantially.  

\begin{proof}[Proof of Theorem~\ref{thm:algo}]
If $s \le k$, we  simply output $X=S$ as 
the removal of $S$ obviously destroys all  $S$-cycles of $G$, long or not.
Thus we may assume $k \le s$.

Following the steps of the proof of Theorem~\ref{thm:main} 
we start with the frame-pendant cycles pair $(\emptyset,\emptyset)$.
In each iteration of the algorithm, we improve this pair, measured by its score.
Since the bound~\eqref{scorebound} will still be valid, the algorithm 
will perform at most $s_k (k + s) = \bigO(k \log k \cdot s)$ iterations (recall that $k\leq s$).

Assume that the algorithm has already constructed a pair $(H,\cK)$, 
and let $B$ be the set of branch vertices of $H$, and $\mathcal L$
the set of its cycle components. In Theorem~\ref{thm:main} we argued 
via Simonovits' Theorem~\ref{thm:simonovits} that $|B|\geq s_{k-|\mathcal L|}$
guarantees $k$ disjoint long $S$-cycles. As these cycles can be computed in $\bigO(n)$-time, 
we are done in that case. Similarly, the bound~\eqref{sizeofpendant} on $|\mathcal K|\leq k$
can also be assumed; otherwise we output the $k$ disjoint long $S$-cycles in $\mathcal K$.

We can compute the set $X$ as in~\eqref{eqn:defX} in $\bigO(\ell n)$ time, 
since $H$ has only $\bigO(n)$ many edges, while
the graph $G-X$ can be computed in $\bigO(m+n)$ time.

Next, we need to check whether there is still a long $S$-cycle in $G-X$. For this, 
we use an algorithm of Bodlaender~\cite[Thm.~5.3]{Bod93}:
it finds a long cycle through a prescribed vertex in any graph
or concludes that there is no such cycle.
We run this algorithm for each vertex of $S$, which amounts 
to $\bigO(s (2^{2\ell} (2\ell)! \cdot n + m))$ time in total.

Now, if there is no long $S$-cycle in $G-X$, we are done and output $X$.
Otherwise, Bodlaender's algorithm finds a long $S$-cycle, say $C$.
Since $H \cup C$ is of maximal degree~$4$, 
it has a linear number of edges, and we may thus check in $\bigO(n)$ time 
in which of the cases of Theorem~\ref{thm:main} we are in, 
and improve $(H,\cK)$ accordingly.

As, consequently, each iteration takes $\bigO(s (2^{2\ell} (2\ell)! \cdot n + m))$-time, 
the total running time amounts to
$\bigO(k \log k \cdot s^2 \cdot ( 2^{2\ell} (2\ell)! \cdot n+m))$.
\end{proof}

\section{Conclusion}

We conclude the article with some observations. 
Our contribution consists in a common generalisation of Theorems~\ref{Birmelthm} and~\ref{paulsthm}. 
Introducing weights would be another obvious
extension. More precisely, given weights $w:V(G)\to \mathbb R_+$ on 
the vertices of a graph $G$, we may ask whether there are $k$ disjoint cycles of 
weight at least $\ell$ each, or a hitting set $X$ of a total weight $w(X)$ bounded in $k$ and $\ell$. 

This proposal, however, still needs a small adaption. Indeed, we cannot expect to bound $w(X)$
only in $k$ and $\ell$ if we allow arbitrarily high weights. This can be seen be taking any 
graph that does not contain any $k$ disjoint cycles, for instance a triangle, and imposing
arbitrarily high weights on all the vertices. On the other hand, there is no sense in having 
weights larger than $\ell$: any cycle containing such a vertex has already a weight of at 
least $\ell$, and this does not change if we cap the weights at $\ell$. 

So a weighted version of our main result would be:

\begin{theorem}\label{thm:wmain}
Let $k$ and $\ell$ be integers. For any graph $G$, any weight function $w:V(G)\to [0,\ell]$ 
 and any subset of vertices $S$ one of the following holds:
\begin{enumerate}[\rm (a)]
	\item there exist $k$ vertex-disjoint $S$-cycles of weight at least $\ell$, or
	\item there is a set $X$ with weight $w(X) = \bigO(\ell \cdot k \log k)$ such that $G-X$ does not contain any $S$-cycle of weight at least $\ell$.
\end{enumerate} 
\end{theorem}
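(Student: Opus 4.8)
The plan is to rerun the proof of Theorem~\ref{thm:main} almost verbatim, systematically replacing ``length'' by ``weight'' and reinterpreting the metric notions. Write $w(P) = \sum_{v \in V(P)} w(v)$ for a path or cycle $P$, call a cycle \emph{heavy} if $w(P) \ge \ell$, and redefine a set $X \subseteq V(H)$ to be \emph{wide} if every path in $H$ with both endvertices in $H - X$ that meets $X$ has weight at least $\ell$. A \emph{frame} becomes a subgraph of maximum degree $3$ and minimum degree $2$ all of whose cycles are heavy $S$-cycles, and the score is defined exactly as before. With these substitutions the augmentation lemma (Lemma~\ref{lem:augment}) and its one-line proof are unchanged: a new cycle through an added $H$-path linking two components of $H - X$ must cross $X$, so wideness makes it heavy, while the $S$-cycle and degree conditions do not involve weights at all.

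The substantive change is in the construction of the candidate set $X$ in~\eqref{eqn:defX}, where the hop-distance balls become weighted balls. For a centre $c$ I would set $d_w(c,v) = \min_Q \sum_{u \in V(Q) \setminus \{c\}} w(u)$, the minimum over $c$--$v$ paths $Q$ in $H$ of the total weight of their vertices other than $c$, and define the ball of radius $r$ around $c$ as $\{v : d_w(c,v) \le r\}$. I take $X_b$, and each half of $X_P$ around the first and last $S$-vertices $s_P, s'_P$ of an edge, to be balls of radius $\ell/2$, while $X_K$ is the ball of radius $\ell$ around $y_K$; this mirrors the original radii $(\ell-1)/2$ and $\ell-1$. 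Wideness of $X$ (and of each $X \setminus \{y_K\}$) follows from a short local argument: along any path with endvertices in $H - X$, a maximal subpath lying in $X$ cannot stop short of a centre $c$ without violating maximality, so it runs from one side of the ball around $c$ through $c$ to the other; as both ends lie outside that ball, each side contributes weighted distance more than $\ell/2$ to $c$, whence the subpath has weight at least $w(c) + \ell \ge \ell$. The doubled radius of $X_K$ is exactly what is needed so that a path ending \emph{at} $y_K$ (which occurs when Lemma~\ref{lem:augment} is applied with the wide set $X \setminus \{y_K\}$) is still heavy from its single side. Crucially, the case analysis that follows --- $C$ disjoint from $H$, meeting it in one vertex, or in several --- uses only which components of $H - X$ a path links and which vertices of $S$ it contains, together with wideness, and so transfers line by line; in the final case a component $D \subseteq P^*$ with $D \cap S \ne \emptyset$ is still separated from $s_{P^*}$ and $s'_{P^*}$ by the weighted balls, placing it strictly between them.

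The one genuinely new point to check, and the place where the hypothesis $w \colon V(G) \to [0,\ell]$ enters, is the weight bound $w(X) = O(\ell k \log k)$. Along each of the at most three branches of $H$ at a centre, a ball of radius $\ell/2$ stops as soon as the accumulated weight reaches $\ell/2$, so it collects at most $\ell/2$ plus the weight of its single last vertex, which is at most $\ell$ by the cap; hence $w(X_b), w(X_P), w(X_K) = O(\ell)$. Summing over the $O(k \log k)$ branch vertices and edges of $\mathcal H$ and the $O(k)$ pendant cycles, exactly as in~\eqref{sizeofX}--\eqref{eqn:truebound}, gives $w(X) = O(\ell k \log k)$. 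I expect this to be the main obstacle: without the cap a single heavy vertex adjacent to a centre would be swept into $X$ and ruin the bound, which is precisely the degeneracy flagged before the theorem. A weighted analogue of Theorem~\ref{thm:algo} would in addition require replacing Bodlaender's subroutine by one that finds a cycle of weight at least $\ell$ through a prescribed vertex, but the argument above already establishes the existence statement of Theorem~\ref{thm:wmain}.
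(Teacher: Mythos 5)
Your proposal is correct and follows essentially the same route as the paper, which in fact only sketches this proof: it says to rerun the argument for Theorem~\ref{thm:main} with the notion of wideness and the balls defining $X$ in~\eqref{eqn:defX} reinterpreted via weighted reachability (the paper suggests weight-$\ell$ balls around branch vertices, while your inclusive radius $\ell/2$ with the doubled radius for $X_K$ works just as well and mirrors the unweighted radii more closely), leaving the remaining adaptations to the reader. You even supply details the paper omits, notably the wideness verification and the observation that the cap $w:V(G)\to[0,\ell]$ is exactly what makes the weight bound $w(X)=\bigO(\ell\cdot k\log k)$ go through, in line with the summation in~\eqref{sizeofX}--\eqref{eqn:truebound}.
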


Indeed, the proof of the above result is similar to that of Theorem~\ref{thm:main}, which can be seen as unit weight version of Theorem~\ref{thm:wmain}.
The major difference is in the definition of the hitting set $X$, given a frame with pendant cycles, since we have to adapt the notion of a wide set to the weighted setting.
For example, instead of including all vertices of distance $\ell/2$ from a branch vertex $b$, we simply have to include all vertices reachable from $b$ by a path of total weight at most $\ell$.
The other necessary adaptions we leave to the interested reader.

\medskip Next, we discuss the size of the hitting set. 
Our bound on the hitting set coincides, up to a constant factor, with the one given by Fiorini and Herinckx~\cite{FH13}.
As they already discuss,
the bound $\bigO(\ell\cdot k\log k)$ 
is asymptotically tight, up to a constant factor, in $k$ resp.~$\ell$
if the other parameter is kept constant.

For fixed~$\ell$ this follows already from the original probabilistic construction of Erd\H os 
and P\'osa~\cite{EP62}, and also from the explicit constructions of Simonovits~\cite{Sim67}. 
Indeed, in both cases the graphs have girth about $\log n$, which means that every cycle is long
(provided $n$ is large enough).

For fixed~$k$, Fiorini and Herinckx give the example of 
the disjoint union of $k-1$ cliques on $2\ell-1$ vertices each. 
Obviously, there are no 
$k$ long cycles, yet $(k-1)\ell$ vertices need to be deleted to guarantee
that no long cycle remains. 
 
If both, $\ell$ and $k$, are allowed to grow it is not clear whether 
a size of $\bigO(\ell\cdot k\log k)$  for the hitting set is best possible. 
In our opinion the existing lower bounds, provided by the above examples, are more convincing. Moreover, our proof 
does seem a bit wasteful. Simonovits' theorem assumes that the cycles found in 
the cubic multigraph
have length about $\log k$; if the cycles were shorter, one could obtain more cycles 
from it. If, however, the cycles obtained from our frame already have length about $\log k$ 
in $\mathcal H$, that is, in the cubic multigraph, then they have length about $\tfrac{\ell}{2}\log k$
in $H$ and thus in $G$. This is because in our construction branch vertices in $H$ have a
distance of at least $\tfrac{\ell}{2}$ from each other. 
We conjecture:

\begin{conjecture}
For every graph $G$, for any subset of vertices $S$,
and for any positive integers $k,\ell$,
there is a set of $k$ disjoint $S$-cycles of length at least $\ell$
or a set $X$ of size $O(k(\ell+\log k))$ such that $G-X$ does not contain any $S$-cycle of length at least $\ell$.
\end{conjecture}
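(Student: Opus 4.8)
The plan is to attack precisely the inefficiency isolated in the paragraph before the conjecture: by spacing \emph{all} branch vertices at pairwise distance at least $\tfrac{\ell}{2}$, the construction forces every cycle read off from the cubic multigraph $\mathcal{H}$ to have length $\Theta(\ell\log k)$ in $G$, whereas length $\ell$ would already suffice. The target bound $O(k(\ell+\log k))=O(k\ell)+O(k\log k)$ asks us to make the additive cost of rendering cycles \emph{long} (the term $k\ell$) and the multiplicative Erd\H os--P\'osa cost of \emph{packing} them (the term $k\log k$) appear as a sum rather than as the product $k\ell\log k$ produced by~\eqref{sizeofXb}--\eqref{sizeofX}.

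The central step is to replace Simonovits' Theorem~\ref{thm:simonovits} by a length-sensitive analogue tailored to subdivided cubic graphs. Viewing the frame $H$ (minus its cycle components) as the multigraph $\mathcal{H}$ with each edge $P\in\mathcal{E}$ weighted by its length $|P|$ in $H$, I would aim to prove a statement of the form: there is a constant $c$ so that if $|B|\ge c\,k\log k$ and $\sum_{P\in\mathcal{E}}|P|\ge c\,k\ell$, then $\mathcal{H}$ carries $k$ edge-disjoint cycles whose lifts to $H$ each have length at least $\ell$. The intuition is a tradeoff against girth: a cycle of $\mathcal{H}$ using $t$ edges lifts to length $\sum|P_i|$, so in the parts of $\mathcal{H}$ with large girth every cycle automatically uses many edges and only a \emph{short} average spacing of order $\ell/\log k$ is needed to reach length $\ell$, while the dense, short-girth parts supply many disjoint short cycles that must instead be lengthened by locally spacing their branch vertices. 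I expect this length-budgeted reworking of Simonovits' counting argument to be the main obstacle, since that estimate is already the delicate heart of Theorem~\ref{epthm} and does not obviously accommodate a per-cycle length constraint.

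Granting such a packing theorem, the second step is to redesign the frame so that the deleted neighbourhoods are \emph{adaptive} rather than uniform. Instead of removing the full $\tfrac{\ell-1}{2}$-ball $X_b$ around every branch vertex, I would delete only as much of its neighbourhood as the hypothesis above demands, charging $O(1)$ per branch vertex to recover the $O(k\log k)$ term and a separate global $O(k\ell)$ for the length requirement. The wideness analysis underlying Lemma~\ref{lem:augment} and the size estimates~\eqref{sizeofXb}--\eqref{sizeofX} would have to be rerun with these adaptive neighbourhoods, and one must still verify that each frame-growth step strictly increases the score so that the termination argument built on~\eqref{scorebound} survives.

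Finally, the same decoupling must be carried out for the $S$-structure, which is where I anticipate a second, subtler difficulty: the sets $X_P$ for edges meeting $S$ and the pendant-cycle sets $X_K$ each currently cost $\Theta(\ell)$ and are charged $\Theta(|\mathcal{E}|)=\Theta(k\log k)$ times, reproducing exactly the wasteful product in~\eqref{eqn:truebound}. I would try to show that the vertices of $S$ can likewise be absorbed \emph{additively}, which is consistent with the Fiorini--Herinckx lower bound of $k-1$ disjoint cliques on $2\ell-1$ vertices: that example forces $(k-1)\ell$ deletions with \emph{no} $\log k$ factor, confirming that the $\ell$ and the $\log k$ contributions should add. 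Everything outside the length-sensitive packing theorem is, in my view, bookkeeping around it; but without a packing statement that genuinely trades girth against edge-length one cannot remove the $\log k$ factor that currently multiplies $\ell$.
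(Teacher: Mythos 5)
You have not proved anything here, and neither does the paper: the statement you were given is stated in the paper explicitly as a \emph{conjecture}, offered without proof, and your text is a research programme rather than an argument. Everything in your plan funnels into one claimed key lemma --- a ``length-sensitive'' analogue of Simonovits' Theorem~\ref{thm:simonovits} --- which you yourself flag as the main obstacle and then simply grant. That lemma is precisely the mathematical content of the conjecture; assuming it is assuming the conclusion. Worse, the lemma is false in the form you state it. You hypothesise only that $|B|\ge c\,k\log k$ and $\sum_{P\in\mathcal E}|P|\ge c\,k\ell$, but total length says nothing about its distribution: take a cubic multigraph on $c\,k\log k$ branch vertices in which a single subdivided edge carries length $c\,k\ell$ and every other edge has length $1$. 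The length budget is met, yet at most one cycle (disjoint or even edge-disjoint) can use the long edge, so all remaining cycles have length only $\Theta(\log k)$ in $H$, far below $\ell$ when $\ell\gg\log k$. Any correct formulation must control concentration, e.g.\ demand many edges of substantial length or let the hitting set adaptively absorb the few long edges --- and that adaptive absorption is exactly the unresolved difficulty. Note also a smaller mismatch: the conjecture requires \emph{vertex}-disjoint $S$-cycles, while your packing statement asks for edge-disjoint cycles in $\mathcal H$, which do not lift to vertex-disjoint cycles in $G$ without further work.

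Your diagnosis of \emph{why} the paper's bound is $O(\ell\cdot k\log k)$ rather than $O(k(\ell+\log k))$ is accurate and matches the authors' own discussion: the uniform $\tfrac{\ell-1}{2}$-balls in \eqref{sizeofXb} charge $\Theta(\ell)$ per branch vertex across $\Theta(k\log k)$ branch vertices, producing the product in \eqref{sizeofX} and \eqref{eqn:truebound}, and the lower-bound examples (girth-$\log n$ graphs for fixed $\ell$; the $k-1$ cliques on $2\ell-1$ vertices for fixed $k$) are additive in spirit. But reproducing the motivating paragraph that precedes the conjecture, plus an honest list of obstacles, does not constitute a proof; as it stands your proposal establishes nothing beyond what the paper already says when it poses the question.
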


\medskip

There are several more open problems related to our results.
\begin{enumerate}
	\item Does the class of cycles that contain at least $p$ vertices in $S$ have the Erd\H{o}s-P\'osa property?
	The same question can also be formulated in a weighted version.
	\item Already Birmel{\'e} et al.~\cite{BBR07} asked for an edge-version for long cycles.
While such a version is well-known for the original Erd\H os-P\'osa result 
it is not obvious how to deduce it from the vertex-version.
On the other hand, by making good use of the set~$S$
Pontecorvi and Wollan~\cite{PW12} could prove an edge-analogue of their result 
via  an easy gadget construction.
Unfortunately, this construction breaks down in the case of long $S$-cycles 
(or even just long cycles).
\end{enumerate}

\bibliographystyle{amsplain}
\bibliography{erdosposa}

\vfill

\small
\vskip2mm plus 1fill
\noindent
Version \today{}
\bigbreak

\noindent
Henning Bruhn
{\tt <henning.bruhn@uni-ulm.de>}\\
Felix Joos
{\tt <felix.joos@uni-ulm.de>}\\
Institut f\"ur Optimierung und Operations Research\\
Universit\"at Ulm, Ulm\\
Germany\\

\noindent
Oliver Schaudt
{\tt <schaudto@uni-koeln.de>}\\
Institut f\"ur Informatik\\
Universit\"at zu K\"oln, K\"oln\\
Germany

\end{document}